\theoremstyle{definition}
\newtheorem{defi}{Definition}[section]
\newtheorem{ex}[defi]{Example}
\theoremstyle{plain}
\newtheorem{thm}[defi]{Theorem}
\newtheorem{lemma}[defi]{Lemma}
\newtheorem{cor}[defi]{Corollary}
\newtheorem{prop}[defi]{Proposition}
\numberwithin{equation}{section}
\newcommand{\R}{\ensuremath{\mathbbm{R}}}     
\newcommand{\Z}{\ensuremath{\mathbbm{Z}}}     
\newcommand{\Q}{\ensuremath{\mathbbm{Q}}}     
\def\ts{\textstyle}
\def\lt{\left}
\def\rt{\right}
\newcommand{\Rd}{\ensuremath{\R^d}}                       
\newcommand{\set}[1]{\left\{#1\right\}}                   
\newcommand{\gen}[1]{\lt\langle #1\rt\rangle}             
\def\imod#1{\allowbreak\mkern10mu({\operator@font mod}\,\,#1)} 
\def\@setcopyright{}                                           
\def\serieslogo@{}
\newcommand{\G}{\ensuremath{\Gamma}}        
\renewcommand{\S}{\ensuremath{\Sigma}}      
\newcommand{\vep}{\ensuremath{\varepsilon}} 
\begin{document}
	\author[J.C.H.~Arias]{Jeanine Concepcion H.~Arias}
	\email[J.C.H.~Arias]{jcarias@math.upd.edu.ph}
	
	\author[E.D.~Gabinete]{Evelyn D.~Gabinete}
	\email[E.D.~Gabinete]{evelyn.dumag@gmail.com}
	
	\author[M.J.C.~Loquias]{Manuel Joseph C.~Loquias}
	\email[M.J.C.~Loquias]{mjcloquias@math.upd.edu.ph}	

	\address{Institute of Mathematics, University of the Philippines Diliman, 1101 Quezon City, Philippines}
	
	\title[Coincidences of a shifted hexagonal lattice and the hexagonal packing]{Coincidences of a shifted hexagonal lattice\\[6pt] and the hexagonal packing}

	\begin{abstract}
	  A geometric study of twin and grain boundaries in crystals and quasicrystals is achieved via coincidence site lattices (CSLs) and coincidence site modules (CSMs), respectively.  	
		Recently, coincidences of shifted lattices and multilattices (i.e. finite unions of shifted copies of a lattice) have been investigated.  Here, we solve the coincidence 
		problem for a shifted hexagonal lattice.  This result allows us to analyze the coincidence isometries of the hexagonal packing by viewing the hexagonal packing as a multilattice.
	\end{abstract}

	\subjclass[2010]{Primary 52C07; Secondary 11H06, 82D25, 52C23}
	
	\keywords{coincidence isometry, coincidence site lattice, multilattice, hexagonal lattice, hexagonal packing}

	\date{\today}

	\maketitle

	\section{Introduction and Preliminaries}
		Coincidence site lattices (CSLs) are used in describing and classifying grain boundaries, twins, and interfaces of crystals (see for instance \cite{B70,GBW74}), and more
		recently, of quasicrystals (see \cite{W93}).  A general and mathematical treatment of the coincidence problem for lattices and $\Z$-modules can be found in~\cite{B97}.  
		In~\cite{LZ10,LZxx}, the idea of coincidences was extended to include affine isometries, shifted lattices, and sets of points formed by the union of a lattice with a finite 
		number of shifted copies of the lattice (referred to as multilattices, see~\cite{PZ98} and references therein). This paper is a continuation of the above-mentioned works where the 
		coincidence problem for a shifted hexagonal lattice and the hexagonal packing are solved.

		A discrete set $\G$ in $\R^d$ is a \emph{lattice} if it is the $\Z$-span of $d$~vectors that are linearly independent.  An $R\in O(d)$ is said to be a (\emph{linear}) 
		\emph{coincidence isometry of $\G$} if $\G\cap R\G$ is a sublattice of full rank in $\G$.  The intersection $\G\cap R\G$ is called the \emph{coincidence site lattice} (CSL) of $\G$
		generated by $R$ and is denoted by $\G(R)$.  The \emph{coincidence index of $R$ with respect to $\G$} is the group index of $\G(R)$ in $\G$, written $\S_{\G}(R)$.  
		Geometrically, $\S_{\G}(R)$ gives the volume of a fundamental domain of $\G(R)$ per volume of a fundamental domain of $\G$.  It is known that the sets $OC(\G)$ and $SOC(\G)$ of
		coincidence isometries and coincidence rotations of $\G$, respectively, form groups.  Background on the mathematical formulation of CSLs can be found in~\cite{B97}.
		
		These notions were generalized to shifted lattices and multilattices in \cite{LZ10,LZxx}.  Given a lattice $\G$ and a vector $x$ in~$\Rd$, the shifted lattice $x+\G$ is obtained by
		translating every point of $\G$ by $x$.  By a \emph{cosublattice of the shifted lattice $x+\G$}, we mean a subset $x+(\ell+\G')$ of $x+\G$, where $\ell+\G'$ is a coset of some
		sublattice $\G'$ of $\G$.  A \emph{linear coincidence isometry $R$ of $x+\G$} is an $R\in O(d)$ such that $(x+\G)\cap R(x+\G)$ contains a cosublattice of $x+\G$.  The \emph{CSL of 
		$x+\G$} obtained from $R$ and the \emph{coincidence index $\S_{x+\G}(R)$ of $R$ with respect to $x+\G$} are defined analogously.  The sets of linear coincidence isometries and
		coincidence rotations of $x+\G$ are denoted by $OC(x+\G)$ and $SOC(x+\G)$, respectively.  The following result from~\cite{LZ10,LZxx} characterizes $OC(x+\G)$ and identifies the CSL
		and coincidence index of an $R\in OC(x+\G)$.
		\begin{thm}\label{oc}
			Let $\G$ be a lattice in $\R^d$ and $x\in\R^d$. 
			\begin{enumerate}[i.]
				\item Then $OC(x+\G)=\set{R\in OC(\G) \mid Rx-x\in\G+R\G}$ and it is a group whenever it is closed under composition.
				
				\item If $R\in OC(x+\G)$ with $Rx-x\in\ell+R\G$ for some $\ell\in\G$, then \[(x+\G)\cap {R(x+\G)}=(x+\ell)+\G(R)\] and $\S_{x+\G}(R)=\S_{\G}(R)$.
			\end{enumerate}
		\end{thm}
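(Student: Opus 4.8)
The plan is to analyze the raw intersection $(x+\G)\cap R(x+\G)$ for an arbitrary $R\in O(d)$ and then read off both the characterization in (i) and the explicit description in (ii). First I would write $R(x+\G)=Rx+R\G$ and observe that a point lies in $(x+\G)\cap(Rx+R\G)$ exactly when it can be expressed as $x+a=Rx+Rb$ with $a,b\in\G$. Rearranging gives $Rx-x=a-Rb$, so the intersection is nonempty if and only if $Rx-x\in\G+R\G$ (using that $R\G$ is stable under negation). This membership condition is therefore the gatekeeper for a coincidence to occur at all.

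Next I would pin down the shape of the intersection when it is nonempty. Fixing any common point $y_0$, any other common point $y$ satisfies $y-y_0\in\G$ and $y-y_0\in R\G$ simultaneously, so $y-y_0\in\G\cap R\G=\G(R)$; conversely every such $y$ lies in the intersection. Hence $(x+\G)\cap R(x+\G)=y_0+\G(R)$ whenever it is nonempty. Writing $y_0=x+\gamma_1$ with $\gamma_1\in\G$, this coset equals $x+(\gamma_1+\G(R))$, which is a cosublattice of $x+\G$ precisely when $\G(R)$ is a sublattice of full rank, i.e.\ when $R\in OC(\G)$. This yields the characterization in (i): the intersection contains a cosublattice if and only if it is nonempty and $R\in OC(\G)$, that is, $OC(x+\G)=\set{R\in OC(\G)\mid Rx-x\in\G+R\G}$. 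For the ``contains'' direction one notes that any cosublattice $x+(\ell'+\G')$ sitting inside the coset $y_0+\G(R)$ forces $\G'\subseteq\G(R)$, so $\G(R)$ inherits full rank from $\G'$.

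To verify the group structure I would check directly that the set contains the identity and is closed under inverses: the identity $I$ satisfies $Ix-x=0\in\G+\G$ and lies in $OC(\G)$; and if $Rx-x\in\G+R\G$, applying $R^{-1}$ gives $x-R^{-1}x\in\G+R^{-1}\G$, hence $R^{-1}x-x\in\G+R^{-1}\G$, while $R^{-1}\in OC(\G)$ since $OC(\G)$ is a group. Thus $OC(x+\G)$ always contains the identity and is closed under inverses, so it is a subgroup of $O(d)$ exactly when it is in addition closed under composition, as claimed.

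For (ii), with $Rx-x\in\ell+R\G$ I would exhibit $x+\ell$ as an explicit common point: it lies in $x+\G$ since $\ell\in\G$, and writing $Rx-x=\ell+Rm$ gives $x+\ell-Rx=-Rm\in R\G$, so $x+\ell\in Rx+R\G$. Taking $y_0=x+\ell$ in the coset description above yields $(x+\G)\cap R(x+\G)=(x+\ell)+\G(R)$. Finally, the coincidence index is the ratio of the point density of $x+\G$ to that of its CSL; since $x+\G$ and $(x+\ell)+\G(R)$ are mere translates of $\G$ and $\G(R)$, and the index of a full-rank sublattice is translation invariant, this ratio equals $[\G:\G(R)]=\S_\G(R)$. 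The step needing the most care is the cosublattice argument in (i): one must confirm that ``the intersection contains a cosublattice'' is genuinely equivalent to ``the intersection is itself a cosublattice,'' which hinges on the observation that a coset of $\G(R)$ can contain a coset of a full-rank sublattice only if $\G(R)$ itself has full rank.
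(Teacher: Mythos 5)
Your proof is correct and takes essentially the same route as the standard one: the paper itself quotes Theorem~\ref{oc} from \cite{LZ10,LZxx} without reproducing a proof, and the argument in those references likewise rests on observing that $(x+\G)\cap R(x+\G)$ is either empty or a single coset $y_0+(\G\cap R\G)$, with nonemptiness equivalent to $Rx-x\in\G+R\G$, from which both the characterization of $OC(x+\G)$ and the formulas in (ii) follow. Your explicit handling of the full-rank issue (a coset of $\G\cap R\G$ can contain a coset of a full-rank sublattice only if $\G\cap R\G$ itself has full rank) is precisely the step that makes the ``contains a cosublattice'' definition collapse to the stated criterion, so nothing is missing.
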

		
		It is also known that corresponding shifted lattices derived from lattices of the same Bravais type share the same set (up to conjugation) of linear coincidence isometries. 
		Furthermore, it is enough to look at values of $x$ in a fundamental domain of the symmetry group of $\G$ to determine all possible sets $OC(x+\G)$.

		A \emph{multilattice} $L$ is the union of a finite number of shifted copies of a lattice~$\G$, i.e., $L=\bigcup_{k=0}^{m-1}(x_k+\G)$, where $x_0=0$ and $x_{k_1}-x_{k_2}
		\in\R^d\setminus\G$ whenever $k_1 \neq k_2$.  In general, a multilattice need not be a lattice.  A linear isometry $R$ is a (\emph{linear}) \emph{coincidence isometry of $L$} if 
		$L\cap RL$ contains a cosublattice of some shifted lattice $x_k+\G$ in $L$.  Here, the intersection $L(R)\vcentcolon=L\cap RL$ is referred to as the \emph{coincidence site
		multilattice} (\emph{CSML}) \emph{of $L$ generated by $R$}.  The set of coincidence isometries of $L$ is written as $OC(L)$ and is equal to $OC(\G)$ \cite{LZxx}.  The 
		\emph{coincidence index of $R$ with respect to $L$}, denoted by $\S_{L}(R)$, is the ratio of the density of points in $L$ to the density of points in $L(R)$.  The CSMLs and
		coincidence indices of a multilattice may be obtained using the following result from~\cite{LZxx}.
		
		\begin{thm}\label{CSMLofL}
			Let $L=\bigcup_{k=0}^{m-1} (x_k+\G)$ be a multilattice generated by the lattice $\G$ and $R \in OC(L)$.
			\begin{enumerate}[i.]	
				\item The intersection $(x_k+\G)\cap R(x_j+\G)$ contains a cosublattice of $x_k+\G$ if and only if $Rx_j-x_k\in\G+R\G$. 
				
				\item Define 
				\begin{equation}\label{sigma}
					\sigma_L(R)=\set{(x_j,x_k):Rx_j-x_k\in\G+R\G}.
				\end{equation}
				Then $\S_{L}(R)=(m/|\sigma_L(R)|\,)\S_{\G}(R)$.
				
				\item For every $(x_j,x_k)\in\sigma_L(R)$ there exists an $\ell_{j,k}\in\G$ such that $Rx_j-x_k\in\ell_{j,k}+R\G$. Thus, 
				\[L(R)=\ts\bigcup_{(x_j,x_k)\in\sigma_L(R)}[(x_k+\ell_{j,k})+\G(R)].\]
			\end{enumerate}
		\end{thm}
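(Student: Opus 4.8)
The plan is to reduce all three parts to the study of the pairwise intersections $(x_k+\G)\cap R(x_j+\G)$. Since intersection distributes over the two finite unions and $R(x_j+\G)=Rx_j+R\G$, one has
\[
L(R)=L\cap RL=\ts\bigcup_{j,k}\bigl[(x_k+\G)\cap (Rx_j+R\G)\bigr].
\]
So everything hinges on understanding when two shifted lattices $x_k+\G$ and $Rx_j+R\G$ meet and, when they do, what their intersection looks like.

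For part (i) I would argue directly: a point lies in $(x_k+\G)\cap(Rx_j+R\G)$ precisely when $x_k+\gamma=Rx_j+R\gamma'$ for some $\gamma,\gamma'\in\G$, i.e. $Rx_j-x_k=\gamma-R\gamma'$. Since $\G=-\G$ and $R\G=-R\G$, such $\gamma,\gamma'$ exist if and only if $Rx_j-x_k\in\G+R\G$. When this holds, fixing one solution $y_0$ in the intersection, every other solution differs from $y_0$ by an element of $\G\cap R\G=\G(R)$, so the intersection equals the single coset $y_0+\G(R)$. Because $R\in OC(L)=OC(\G)$, the lattice $\G(R)$ has full rank, hence $y_0+\G(R)$ is a cosublattice of $x_k+\G$; this gives both directions of (i).

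For part (iii), given $(x_j,x_k)\in\sigma_L(R)$ I would write $Rx_j-x_k=\gamma+R\gamma'$ with $\gamma,\gamma'\in\G$ and set $\ell_{j,k}\vcentcolon=\gamma\in\G$, so that $Rx_j-x_k\in\ell_{j,k}+R\G$. A short check shows $x_k+\ell_{j,k}=R(x_j-\gamma')\in Rx_j+R\G$ and trivially $x_k+\ell_{j,k}\in x_k+\G$, so it is an admissible choice of $y_0$ and the pairwise intersection equals $(x_k+\ell_{j,k})+\G(R)$. (Any other admissible $\ell_{j,k}$ differs by an element of $\G\cap R\G=\G(R)$, so the coset is well defined.) Discarding the empty pairwise intersections in the displayed formula for $L(R)$---exactly those with $(x_j,x_k)\notin\sigma_L(R)$---then yields the stated union.

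For part (ii) I would compute the two densities entering $\S_{L}(R)$. The $m$ shifted copies $x_k+\G$ are pairwise disjoint because $x_{k_1}-x_{k_2}\notin\G$, so the density of $L$ is $m$ times that of $\G$. The main obstacle is to show the cosets $(x_k+\ell_{j,k})+\G(R)$ appearing in $L(R)$ are pairwise disjoint, for then the density of $L(R)$ is $\abs{\sigma_L(R)}$ times that of $\G(R)$, and the ratio collapses to $(m/\abs{\sigma_L(R)})\,[\G:\G(R)]=(m/\abs{\sigma_L(R)})\,\S_{\G}(R)$. Disjointness I would establish by a case analysis on a pair of indices $(x_j,x_k)\neq(x_{j'},x_{k'})$: if $k\neq k'$ the two cosets lie in the disjoint shifted lattices $x_k+\G$ and $x_{k'}+\G$; if $k=k'$ but $j\neq j'$ they lie in $R(x_j+\G)$ and $R(x_{j'}+\G)$, which are disjoint because $R$ is injective and $x_j+\G$, $x_{j'}+\G$ are disjoint. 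This case analysis, together with the bookkeeping that the union defining $L(R)$ runs over the full set $\sigma_L(R)$, is the one place requiring care; the density arithmetic itself is then routine.
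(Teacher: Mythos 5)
The paper does not actually prove this theorem: it is quoted as a known result from \cite{LZxx}, so there is no in-paper argument to compare against. Judged on its own, your proof is correct and complete, and it follows the standard route of the cited source: decompose $L\cap RL$ into the pairwise intersections $(x_k+\G)\cap R(x_j+\G)$, observe that each nonempty such intersection is a single coset of $\G(R)=\G\cap R\G$ (nonemptiness being equivalent to $Rx_j-x_k\in\G+R\G$), note that these cosets are pairwise disjoint because distinct $x_k+\G$ are disjoint and $R$ is injective, and then compare densities. The two points that genuinely need care are ones you handle: full rank of $\G(R)$, needed so that the coset $y_0+\G(R)$ really is a cosublattice, comes from $R\in OC(L)=OC(\G)$; and $\sigma_L(R)\neq\nullset$ for $R\in OC(L)$, so the formula $\S_{L}(R)=(m/|\sigma_L(R)|)\S_{\G}(R)$ never divides by zero.
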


	\section{Coincidences of the Hexagonal Lattice}
		Let $\xi=\exp(2\pi i/3)$.  We consider the hexagonal or triangular lattice \[\G=\Z[\xi]=\set{m+n\xi\mid m,n\in\Z},\] also known as the set of Eisenstein integers or the ring of
		integers of the cyclotomic field $\Q(\xi)$.  If $\gamma=m+n\xi\in\Z[\xi]$, then the conjugate of $\gamma$ is $\bar{\gamma}=(m-n)-n\xi$ and the number theoretic norm of $\gamma$ is 
		$N(\gamma)=\gamma\bar{\gamma}={(m-n)^2+mn}$. The ring $\Z[\xi]$ is a Euclidean domain under the given norm~\cite{HW08}, and hence also a principal ideal domain.  Its units form 
		the cyclic group $\set{1,-\xi^2,\xi,-1,\xi^2,-\xi}=\gen{-\xi^2}\cong C_6$ and they correspond to rotational point symmetries of $\G$. 
		
		The coincidence problem for the hexagonal lattice has been solved in~\cite{PBR96} (see also~\cite{BG06}).  Any coincidence rotation~$R$ of $\G$ by an angle of $\varphi$ corresponds
		to multiplication by 
			\begin{equation}\label{eiphi}
				\exp(i\varphi)=\vep\cdot\prod_{p \equiv 1(3)}\bigg(\frac{\;\omega_p\;}{\overline{\omega_p}}\bigg)^{t_p}
			\end{equation}
		where the product runs over all rational primes $p\equiv 1\imod 3$, $t_p\in\Z$ for which only finitely many of them are nonzero, $\vep$ is a unit in $\mathbbm{Z}[\xi]$, and $p$ is 
		split into its prime factors $\omega_p$ and its conjugate $\overline{\omega_p}$, i.e., $p=\omega_p\overline{\omega_p}$.  We define the numerator $z$ of $R$ by
		\begin{equation}\label{numerator}
			z=\prod_{p\equiv 1(3),t_p>0}{\omega_p^{t_p}}\cdot\prod_{p\equiv 1(3),t_p<0}{(\overline{\omega_p}\,)}^{-t_p}.
		\end{equation}
		Then the CSL obtained from $R$ is $\G(R)=z\mathbbm{Z}[\xi]=(z)$ generated by $z$ and $\vep z$.  In addition, the coincidence index of $R$ is the norm of $z$ in $\Z[\xi]$ given by
		\[\S_{\G}(R)=N(z)=\prod_{p\equiv 1(3)}{p^{|t_p|}}.\]
		
		Hence, the group of coincidence rotations of $\G$ has the structure $SOC(\G)\cong C_6\times\Z^{(\aleph_0)}$, where $\Z^{(\aleph_0)}$ denotes the direct sum of countably many
		infinite cyclic groups, each with a generator of the form $\omega_p/\overline{\omega_p}$ as in~\eqref{eiphi}.  Since every coincidence reflection $T$ can be expressed
		as $T=R\cdot T_{r}$, where $R\in SOC(\G)$ and $T_r$ is the reflection along the real axis (corresponding to complex conjugation), it follows that $OC(\G)=SOC(\G)\rtimes\gen{T_r}$.
			
		Denote by $f_{\Z[\xi]}(m)$ the number of CSLs of $\G$ of index $m$.  Since $\Z[\xi]$ is a unique factorization domain, $f_{\Z[\xi]}$ is a multiplicative arithmetic function, i.e., 
		$f_{\Z[\xi]}(mn) = f_{\Z[\xi]}(m)f_{\Z[\xi]}(n)$ whenever $m$ and $n$ are relatively prime.  The Dirichlet series generating function for $f_{\Z[\xi]}$ is given by 
		\[\begin{aligned}
			\Phi_{\Z[\xi]}(s)&=\sum_{m=1}^{\infty}{\frac{f_{\mathbbm{Z}[\xi]}(m)}{m^s}}=\prod_{p\equiv 1 (3)}\frac{1+p^{-s}}{1-p^{-s}}\\
			&=1+\tfrac{2}{7^s}+\tfrac{2}{13^s}+\tfrac{2}{19^s}+\tfrac{2}{31^s}+\tfrac{2}{37^s}+\tfrac{2}{43^s}+\cdots\;.
		\end{aligned}\]
		If $\hat{f}_{\Z[\xi]}(m)$ counts the number of coincidence rotations of $\G$ yielding CSLs of index $m$, then $\hat{f}_{\Z[\xi]}=6f_{\Z[\xi]}$.
		
	\section{Coincidences of a Shifted Hexagonal Lattice}
		We obtain the solution to the coincidence problem for a shifted hexagonal lattice using methods and techniques similar to the ones used for a shifted square lattice 
		(see~\cite{LZ10,LZxx}).  Details of the following results can be found in~\cite{G13}.  For the rest of this paper, we shall denote by $R_{z,\vep}$ and $T_{z,\vep}$ the
		coincidence rotation $R\in {SOC(x+\G)}$ and coincidence reflection $T\in OC(x+\G)$ associated with the numerator $z$ and unit $\vep$.  The following lemma allows us to identify 
		$OC(x+\G)$ for specific values of $x$.
		\begin{lemma}\label{ez-barz}
			Let $\G=\Z[\xi]$, $x=a+b\xi$ where $a,b\in\R$, $R=R_{z,\vep}\in SOC(\G),$ and $T=R\cdot T_r$.  Then
			\begin{enumerate}[i.]
				\item $R\in SOC(x+\G)$ if and only if $(\vep z-\bar{z})x\in\G$.
				
				\item $T \in OC(x+\G)$ if and only if $\vep z\bar{x}-\bar{z}x\in\G$.
			\end{enumerate}
		\end{lemma}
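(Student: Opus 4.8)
The plan is to translate everything into arithmetic in $\Z[\xi]$ and then apply Theorem~\ref{oc}(i). First I would record that, viewed as a map on $\C$, the coincidence rotation $R=R_{z,\vep}$ is multiplication by $\exp(i\varphi)=\vep z/\bar z$. This follows by comparing \eqref{eiphi} with the definition \eqref{numerator} of the numerator: conjugating $z$ interchanges the factors $\omega_p^{t_p}$ and $\overline{\omega_p}^{\,t_p}$, which produces exactly the denominator appearing in \eqref{eiphi}. Consequently $R\G=(\vep z/\bar z)\Z[\xi]=(z/\bar z)\Z[\xi]$, since $\vep$ is a unit, and the associated reflection $T=R\cdot T_r$ acts as $\gamma\mapsto(\vep z/\bar z)\bar\gamma$, while $T\G=R(\overline{\Z[\xi]})=R\G$ because $\Z[\xi]$ is stable under conjugation.

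The key computation is to identify the lattice $\G+R\G$. Writing $R\G=(z/\bar z)\Z[\xi]$ and factoring out $1/\bar z$, I would get $\G+R\G=\frac{1}{\bar z}\bigl(\bar z\,\Z[\xi]+z\,\Z[\xi]\bigr)=\frac{1}{\bar z}(z,\bar z)$, where $(z,\bar z)$ denotes the ideal generated by $z$ and $\bar z$. The crucial observation, which I expect to be the main point of the argument, is that $z$ and $\bar z$ are coprime in $\Z[\xi]$: by \eqref{numerator} the numerator $z$ is a product of the primes $\omega_p$ (for $t_p>0$) and $\overline{\omega_p}$ (for $t_p<0$), choosing exactly one prime from each conjugate pair dividing a split rational prime $p\equiv 1\imod 3$, so $\bar z$ is assembled from the complementary primes and shares no prime factor with $z$. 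Since $\Z[\xi]$ is a principal ideal domain, this forces $(z,\bar z)=\Z[\xi]$ and hence $\G+R\G=\frac{1}{\bar z}\Z[\xi]$. The same identity holds for $\G+T\G$, since $T\G=R\G$.

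With this in hand both parts are immediate. For part~(i), Theorem~\ref{oc}(i) requires $Rx-x\in\G+R\G$, which now reads $(\vep z/\bar z)x-x\in\frac{1}{\bar z}\Z[\xi]$; multiplying through by $\bar z$ turns this into $(\vep z-\bar z)x\in\Z[\xi]=\G$. For part~(ii), I would substitute $Tx=(\vep z/\bar z)\bar x$ into the analogous criterion $Tx-x\in\G+T\G=\frac{1}{\bar z}\Z[\xi]$; clearing the denominator $\bar z$ yields $\vep z\bar x-\bar z x\in\Z[\xi]=\G$, as claimed. The only case needing a word of care is the degenerate one where $z$ is a unit (all $t_p=0$), so that $R$ is a point symmetry; there $\bar z$ is also a unit, $\G+R\G=\Z[\xi]$, and both criteria reduce correctly, so no separate argument is needed.
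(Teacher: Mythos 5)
Your proposal is correct and takes essentially the route the paper itself relies on (the proof is deferred to~\cite{G13}, but the paper states it uses the same techniques as for the shifted square lattice in~\cite{LZ10,LZxx}): namely, applying Theorem~\ref{oc}(i) after observing that $R$ acts as multiplication by $\vep z/\bar{z}$ and that coprimality of $z$ and $\bar{z}$ in the principal ideal domain $\Z[\xi]$ gives $\G+R\G=\G+T\G=\tfrac{1}{\bar{z}}\,\Z[\xi]$, so that clearing the denominator $\bar{z}$ yields both criteria. Your identification of the coprimality of $z$ and $\bar{z}$ (no common Eisenstein prime factors, since each split prime $p\equiv 1\imod 3$ contributes $\omega_p$ or $\overline{\omega_p}$ to $z$ but never both) is precisely the key arithmetic point.
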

		
		The following results about the sets $OC(x+\G)$ and $SOC(x+\G)$ are consequences of Lemma~\ref{ez-barz}.
		\begin{thm}
			Let $\G=\Z[\xi]$ and $x=a+b\xi$, where $a,b\in\R$. 
			\begin{enumerate}[i.]
				\item $SOC(x+\G)$ is a subgroup of $SOC(\G)$.
				
				\item $OC(x+\G)$ is a subgroup of $OC(\G)$ if and only if for any coincidence reflections $T_1,T_2\in OC(x+\G)$, the coincidence rotation $T_2T_1\in SOC(x+\G)$.
				
				\item If $OC(x+\G)$ contains a reflection symmetry $T\in P(\G)$, then $OC(x+\G)$ is a group with $OC(x+\G) = SOC(x+\G)\rtimes\gen{T}$.
			\end{enumerate}
		\end{thm}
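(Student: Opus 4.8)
The plan is to handle the three parts in order, with essentially all of the arithmetic concentrated in part~(i); parts~(ii) and~(iii) then reduce to group- and coset-bookkeeping.

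\emph{Part (i).} By Theorem~\ref{oc}(i) we have $SOC(x+\G)=OC(x+\G)\cap SOC(\G)\subseteq SOC(\G)$, so only the subgroup axioms remain, and throughout I would use Lemma~\ref{ez-barz}(i): $R_{z,\vep}\in SOC(x+\G)$ exactly when $(\vep z-\bar z)x\in\G$. The identity $R_{1,1}$ clearly qualifies. For inverses, $R_{z,\vep}^{-1}=R_{\bar z,\bar\vep}$, and since $\bar\vep\bar z-z=-\bar\vep(\vep z-\bar z)$ with $-\bar\vep$ a unit, $(\bar\vep\bar z-z)x$ is a unit multiple of $(\vep z-\bar z)x\in\G$. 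The substance is closure. Writing $R_i=R_{z_i,\vep_i}$, the composite $R_2R_1$ is multiplication by $\vep_1\vep_2 z_1z_2/(\bar z_1\bar z_2)$; after cancelling $g=\gcd(z_1z_2,\bar z_1\bar z_2)$ its numerator is $z_3=z_1z_2/g$ and its unit is $\vep_1\vep_2$, so by Lemma~\ref{ez-barz}(i) I must verify $(\vep_1\vep_2 z_1z_2-\bar z_1\bar z_2)x\in g\G$.

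The key is the pair of algebraic identities
\[\vep_1\vep_2 z_1z_2-\bar z_1\bar z_2=\vep_1 z_1(\vep_2 z_2-\bar z_2)+\bar z_2(\vep_1 z_1-\bar z_1)=\vep_2 z_2(\vep_1 z_1-\bar z_1)+\bar z_1(\vep_2 z_2-\bar z_2).\]
Each shows at once that $(\vep_1\vep_2 z_1z_2-\bar z_1\bar z_2)x\in\G$. To upgrade this to divisibility by $g$, I would factor $g=h\bar h$ where $h=\gcd(z_1,\bar z_2)$; because a numerator contains at most one of $\omega_p,\overline{\omega_p}$ for each split prime $p$, one gets $\gcd(h,\bar h)=1$ with $h\mid z_1,\ h\mid\bar z_2$ and $\bar h\mid\bar z_1,\ \bar h\mid z_2$. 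Feeding $h\mid z_1,\bar z_2$ into the first identity (multiplied by $x$) places $(\vep_1\vep_2 z_1z_2-\bar z_1\bar z_2)x$ in $h\G$, while $\bar h\mid z_2,\bar z_1$ with the second identity places it in $\bar h\G$; coprimality of $h,\bar h$ then yields membership in $h\bar h\,\G=g\G$. This divisibility step, the one point where the ring structure of $\Z[\xi]$ genuinely enters, is the main obstacle of the whole theorem.

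\emph{Part (ii).} Write $OC(x+\G)=S\cup F$, with $S=SOC(x+\G)$ the rotations and $F$ the reflections it contains. Every reflection is an involution, so inverses of elements of $F$ stay in $F$ automatically, and inverses of rotations stay in $S$ by part~(i); hence $OC(x+\G)$ is a subgroup iff it is closed under composition. Splitting products by type, $S\cdot S\subseteq S$ is part~(i). I claim $S\cdot F\subseteq F$ always holds: for $R=R_{z,\vep}\in S$ and $T=T_{w,\delta}\in F$ the product $RT$ is a reflection $T_{z',\vep\delta}$ with $z'=zw/\gcd(zw,\bar z\bar w)$, and the reflection criterion of Lemma~\ref{ez-barz}(ii) for it is checked by the same $g=h\bar h$ factorization (now $h=\gcd(z,\bar w)$), the only new point being the use of $\overline{(\vep z-\bar z)x}\in\G$ to absorb the $\bar x$ that appears. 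Then $F\cdot S\subseteq F$ follows formally, since for $T\in F,R\in S$ the reflection $TR$, being an involution, equals $(TR)^{-1}=R^{-1}T\in S\cdot F\subseteq F$. Thus the only remaining product type is $F\cdot F$, and $OC(x+\G)$ is closed—equivalently a subgroup—if and only if $F\cdot F\subseteq S$, which is exactly the stated condition; the forward implication is immediate.

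\emph{Part (iii).} Here I would invoke the uniform criterion of Theorem~\ref{oc}(i). If $T\in P(\G)$ is a reflection symmetry lying in $OC(x+\G)$, then $T\G=\G$, so $Tx-x\in\G+T\G=\G$. For any reflection $T'\in OC(x+\G)$ I compute
\[(T'T)x-x=T'(Tx-x)+(T'x-x)\in T'\G+(\G+T'\G)=\G+T'\G=\G+(T'T)\G,\]
using $(T'T)\G=T'(T\G)=T'\G$; hence $T'T\in SOC(x+\G)$. Since $T^2=\mathrm{id}$, any two reflections satisfy $T_2T_1=(T_2T)(TT_1)$ with both factors in $S$, so the condition of part~(ii) holds and $OC(x+\G)$ is a group. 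Finally $S=SOC(x+\G)$ is the kernel of $\det$ restricted to $OC(x+\G)$, hence normal of index~$2$; $\gen{T}\cong C_2$ meets $S$ trivially; and each reflection $T'$ equals $(T'T)T\in S\gen{T}$. Therefore $OC(x+\G)=SOC(x+\G)\rtimes\gen{T}$, as claimed.
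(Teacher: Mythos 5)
Your proposal is correct and takes essentially the approach the paper intends: the paper presents this theorem as a consequence of Lemma~\ref{ez-barz} (deferring details to~\cite{G13}, following the shifted square lattice treatment of~\cite{LZ10,LZxx}), and your argument---reduction to the criteria $(\vep z-\bar{z})x\in\G$ and $\vep z\bar{x}-\bar{z}x\in\G$, the product identities, and the $g=h\bar{h}$ coprimality step in $\Z[\xi]$---is exactly that method. The only (harmless) deviation is that you verify part (iii) via the general criterion $Rx-x\in\G+R\G$ of Theorem~\ref{oc}(i) rather than through Lemma~\ref{ez-barz}.
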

		
		\begin{cor}\label{varepsilon}
			Let $\G=\Z[\xi]$. If one of the following conditions on $x=a+b\xi$ holds: ${a\in\Z}, b\in\Z, a-b\in\Z, b-2a\in\Z, a+b\in\Z$ or $a-2b\in\Z$, then 
			\[OC(x+\G)=SOC(x+\G)\rtimes\gen{T_{1,\vep}}\] is a subgroup of $OC(\G)$ where
			\[\vep=\begin{cases}
				1,& \text{if } b\in\Z\\
				\xi,& \text{if } a-b\in\Z\\
				\xi^2, & \text{if } a\in\Z\\
				-1,& \text{if } -2a+b\in\Z\\				
				-\xi,& \text{if } a+b\in\Z\\				
				-\xi^2, & \text{if } a-2b\in\Z. 
			\end{cases}\]
		\end{cor}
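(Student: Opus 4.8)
The plan is to invoke part~(iii) of the preceding theorem, which reduces the problem to exhibiting a single reflection symmetry of $\G$ that lies in $OC(x+\G)$. First I would observe that for each unit $\vep$, the isometry $T_{1,\vep}=R_{1,\vep}\cdot T_r$ is a reflection symmetry of $\G$: the numerator $z=1$ forces $\G(R_{1,\vep})=(1)=\G$, so $R_{1,\vep}$ is simply multiplication by the unit $\vep$, a rotational point symmetry of $\G$, and composing with the reflection $T_r$ (complex conjugation, which also preserves $\Z[\xi]$) yields an orientation-reversing element of $P(\G)$. Hence it suffices to show that, under each of the six hypotheses on $x$, the corresponding $T_{1,\vep}$ belongs to $OC(x+\G)$; part~(iii) of the preceding theorem then immediately gives $OC(x+\G)=SOC(x+\G)\rtimes\gen{T_{1,\vep}}$ as a subgroup of $OC(\G)$.

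To test membership I would apply Lemma~\ref{ez-barz}(ii) with $z=1$ (so $\bar z=1$), which specializes the criterion $\vep z\bar x-\bar z x\in\G$ to
\[
	T_{1,\vep}\in OC(x+\G)\iff \vep\bar x-x\in\G.
\]
Writing $x=a+b\xi$ gives $\bar x=(a-b)-b\xi$, and using the relation $\xi^2=-1-\xi$ one can expand $\vep\bar x-x$ for each of the six units into the standard basis $\set{1,\xi}$. The resulting expressions are, respectively,
\[
	-b(1+2\xi),\quad (b-a)(1-\xi),\quad -a(2+\xi),\quad b-2a,\quad -(a+b)(1+\xi),\quad (a-2b)\,\xi,
\]
corresponding to $\vep=1,\ \xi,\ \xi^2,\ -1,\ -\xi,\ -\xi^2$ in turn.

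Each of these six Eisenstein-integer combinations lies in $\G=\Z[\xi]$ precisely when the relevant real coefficient is an integer, namely $b\in\Z$, $a-b\in\Z$, $a\in\Z$, $b-2a\in\Z$, $a+b\in\Z$, and $a-2b\in\Z$, matching the six hypotheses with the prescribed values of $\vep$. Thus in every case the stated condition on $x$ guarantees $\vep\bar x-x\in\G$, hence $T_{1,\vep}\in OC(x+\G)$, and the conclusion follows from part~(iii) of the preceding theorem. I expect the only real work to be the bookkeeping in the second step: one must carry out the six multiplications $\vep\bar x$ correctly and reduce each via $\xi^2=-1-\xi$ to read off the single integrality condition that forces membership in $\Z[\xi]$; once the table of expressions above is established, the matching of each condition to its unit $\vep$ is immediate.
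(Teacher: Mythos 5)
Your proposal is correct and follows exactly the route the paper intends: the corollary is presented there as a consequence of Lemma~\ref{ez-barz} (with details deferred to the thesis~\cite{G13}), namely specializing Lemma~\ref{ez-barz}(ii) to $z=1$ and invoking part~(iii) of the preceding theorem once $T_{1,\vep}\in OC(x+\G)\cap P(\G)$ is exhibited. I checked all six expansions of $\vep\bar{x}-x$ (e.g. $-b(1+2\xi)$ for $\vep=1$, $(a-2b)\xi$ for $\vep=-\xi^2$) and they are all correct, as is the matching of each integrality condition to its unit.
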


		One uses the following lemma to distinguish which Eisenstein integers are possible numerators of coincidence rotations of $\G$.
		\begin{lemma}\label{mndiv}
			Let $\G=\Z[\xi]$ and $z=m+n\xi\in\Z[\xi]$ such that $\gcd(m,n)=1$. Then the following are equivalent: {\rm (i)} $z$ is a numerator of some $R\in SOC(\G)$,
			{\rm (ii)} $3\nmid N(z)$, and {\rm (iii)} $3\nmid (m+n)$.
		\end{lemma}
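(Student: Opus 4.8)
The plan is to reduce everything to the prime factorization of $z$ in the principal ideal domain $\Z[\xi]$, using the classification of its primes according to the splitting behaviour of the rational primes: the prime $3$ ramifies as $3=-\xi^2(1-\xi)^2$, so that $\lambda:=1-\xi$ satisfies $N(\lambda)=3$; each rational prime $p\equiv 1\imod 3$ splits as $p=\omega_p\overline{\omega_p}$ with $N(\omega_p)=p$; and each rational prime $p\equiv 2\imod 3$ remains inert with $N(p)=p^2$. The equivalence of (ii) and (iii) is a short congruence computation, whereas the equivalence of (i) and (ii) is where the hypothesis $\gcd(m,n)=1$ does the real work, so I would treat that as the heart of the argument.

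For (ii)$\Leftrightarrow$(iii) I would simply reduce the norm modulo $3$. Since $-mn\equiv 2mn\imod 3$, one has
\[
  N(z)=m^2-mn+n^2\equiv m^2+2mn+n^2=(m+n)^2\imod 3.
\]
Because $3$ is prime, $3\mid(m+n)^2$ if and only if $3\mid(m+n)$, so $3\nmid N(z)$ holds exactly when $3\nmid(m+n)$, which gives the equivalence.

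For (i)$\Leftrightarrow$(ii) I would argue through the factorization of $z$. By the defining formula~\eqref{numerator}, an Eisenstein integer is (an associate of) a numerator precisely when it is a product of split primes containing at most one prime from each conjugate pair $\set{\omega_p,\overline{\omega_p}}$, and in particular involving neither $\lambda$ nor any inert prime. The hypothesis $\gcd(m,n)=1$ enforces most of these conditions automatically: if an inert rational prime $p\equiv 2\imod 3$ divided $z$, then, being prime in $\Z[\xi]$, it would divide both $m$ and $n$; similarly, if both $\omega_p$ and $\overline{\omega_p}$ divided $z$ for some split $p$, then the rational prime $p=\omega_p\overline{\omega_p}$ would divide $z$ and hence $m$ and $n$. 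Either case contradicts primitivity, so a primitive $z$ can have only split primes (with no repeated conjugate pair) and possibly $\lambda$ among its prime factors. Consequently such a $z$ is a numerator if and only if $\lambda\nmid z$.

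Finally I would tie divisibility by $\lambda$ back to condition (ii). Since $\lambda$ is the unique prime (up to associates) whose norm is divisible by $3$, we have $\lambda\mid z$ if and only if $3\mid N(z)$; moreover primitivity even rules out $\lambda^2\mid z$, because $\lambda^2=-3\xi$ is an associate of $3$. Combining the two reductions yields the chain
\[
  z \text{ is a numerator} \iff \lambda\nmid z \iff 3\nmid N(z),
\]
which completes (i)$\Leftrightarrow$(ii). The main obstacle, and the only place calling for care, is the bookkeeping with units and associates in this last chain: one must verify that absorbing the ambiguity in each $\omega_p$ (and in the overall unit $\vep$) into the factors of~\eqref{numerator} cannot let a genuinely non-numerator element pass as a numerator, which is exactly what the ramified prime $\lambda$ controls.
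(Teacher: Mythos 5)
Your proof is correct and takes essentially the approach that the paper's framework dictates (the paper itself defers the details to~\cite{G13}): reduce to the prime factorization in the PID $\Z[\xi]$, use the splitting behaviour of rational primes so that coprimality of $m,n$ rules out inert primes and simultaneous conjugate pairs, and let the ramified prime $1-\xi$ --- detected exactly by $3\mid N(z)$ --- be the sole remaining obstruction, with the congruence $N(z)\equiv(m+n)^2\imod 3$ giving the equivalence of (ii) and (iii). Your care with units/associates is warranted and correctly resolved, since the lemma only holds when numerators are read up to associates, which is consistent with the paper's identification $\G(R)=(z)$ as an ideal.
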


		The next theorem gives the solution of the coincidence problem for a shifted hexagonal lattice whenever one of the components of the shift vector is irrational.
		\begin{thm}
			Let $\G=\Z[\xi]$ and $x=a+b\xi$ for $a,b\in\R$. If $a$ or $b$ is irrational then $OC(x+\G)$ is a group having at most two elements. In particular, if
			\begin{enumerate}[i.]
				\item $a$ is irrational and $b$ is rational then
				\[OC(x+\G)=\begin{cases}
					\gen{T_r}, & \text{if } b\in\Z\\
					\set{\mathbbm{1}}, & \text{otherwise}.
				\end{cases}\]
				
				\item $a$ is rational and $b$ is irrational then 
				\[OC(x+\G)=\begin{cases}
					\gen{T_{1,\xi^2}}, & \text{if } a\in\Z\\
					\set{\mathbbm{1}}, & \text{otherwise}.
				\end{cases}\]
				
				\item both $a$ and $b$ are irrational, and
				\begin{enumerate}[\emph{(}a\emph{)}]
					\item 1, $a$, and $b$ are rationally independent then $OC(x+\G)=\set{\mathbbm{1}}$.
					
					\item $a=(p_1/q_1)+(p_2/q_2)b$ where $p_j, q_j\in\Z$, and $\gcd(p_j,q_j)=1$ for $j\in\set{1,2}$ with
					\begin{enumerate}[\emph{(}1\emph{)}]
						\item $p_2q_2\equiv 0,1\imod 3$, then 							
						\[OC(x+\G)=\begin{cases}
							\gen{T_{p_2 +q_2\xi,1}}, & \text{if } q_1\mid q_2\\
							\set{\mathbbm{1}}, &\text{otherwise}.
						\end{cases}\]
						
						\item $p_2q_2\equiv 2\imod 3$, then 
						\[OC(x+\G)=\begin{cases}
							\gen{T_{(2q_2-p_2)/3+(q_2-2p_2)\xi/3,-1}}, & \text{if } q_1\mid q_2\\
							\set{\mathbbm{1}}, &\text{otherwise}.
						\end{cases}\]
					\end{enumerate}
				\end{enumerate}
			\end{enumerate}
		\end{thm}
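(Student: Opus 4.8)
The plan is to treat coincidence rotations and reflections separately through the two parts of Lemma~\ref{ez-barz}, the underlying principle being that an irrational component in the shift destroys almost all coincidences. For rotations, Lemma~\ref{ez-barz}(i) gives $R_{z,\vep}\in SOC(x+\G)$ precisely when $(\vep z-\bar z)x\in\G$. Writing $\vep z-\bar z=u+v\xi$ with $u,v\in\Z$ and using $\xi^2=-1-\xi$, this becomes the pair of conditions $ua-vb\in\Z$ and $va+ub-vb\in\Z$. In every case the arithmetic nature of $a,b$ forces $u=v=0$; for example, in~(iii)(b) substituting $a=p_1/q_1+(p_2/q_2)b$ reduces the two conditions to $v=u p_2/q_2$ and $u\bigl(1-p_2/q_2+(p_2/q_2)^2\bigr)=0$, and since $1-p_2/q_2+(p_2/q_2)^2=N(p_2+q_2\xi)/q_2^2>0$ we get $u=v=0$. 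Thus $\vep z=\bar z$, so $\exp(i\varphi)=\vep(z/\bar z)=\vep\bar\vep=1$ and the rotation is trivial; hence $SOC(x+\G)=\set{\mathbbm 1}$ throughout, and only the reflections remain.

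For the reflections I would expand Lemma~\ref{ez-barz}(ii). With $\bar x=(a-b)-b\xi$ and the shorthand $\vep z=c+d\xi$, $\bar z=e+f\xi$, separating the real and $\xi$-parts rewrites $\vep z\bar x-\bar z x\in\G$ as $(c-e)a+(d+f-c)b\in\Z$ and $(d-f)a+(f-c-e)b\in\Z$. The governing idea is that the coefficient of each irrational quantity must vanish, which fixes the direction $\vep z/\bar z$ of the reflection, while the surviving rational part becomes an integrality constraint. If $a\notin\Q$ and $b\in\Q$, the $a$-coefficients $c-e$ and $d-f$ vanish, forcing $\vep z=\bar z$; the reflection is then $T_r$ and the residual constraint is $b\in\Z$. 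If $a\in\Q$ and $b\notin\Q$, the $b$-coefficients vanish, forcing $\vep z=\xi^2\bar z$; the reflection is $T_{1,\xi^2}$ and the residual constraint is $a\in\Z$. If $1,a,b$ are rationally independent, all four coefficients vanish, forcing $\vep z=0$, so no reflection exists and $OC(x+\G)=\set{\mathbbm 1}$.

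The core of the argument is case~(iii)(b). Putting $\eta=p_2/q_2+\xi$ and substituting $a=p_1/q_1+(p_2/q_2)b$, I would write $x=p_1/q_1+b\eta$ and $\bar x=p_1/q_1+b\bar\eta$, so that $\vep z\bar x-\bar z x=(p_1/q_1)(\vep z-\bar z)+b(\vep z\bar\eta-\bar z\eta)$. As $b\notin\Q$, the coefficient of $b$ must vanish, which gives $\vep z/\bar z=\eta/\bar\eta=\zeta/\bar\zeta$ with $\zeta=p_2+q_2\xi$; the reflection is thereby forced to have direction $\zeta/\bar\zeta$, and the constant term leaves the requirement $q_1\mid(\vep z-\bar z)$ in $\G$. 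The main obstacle, and the sole genuinely number-theoretic step, is to realize $\zeta/\bar\zeta$ as the direction of an honest coincidence reflection, i.e.\ to produce a primitive numerator; by Lemma~\ref{mndiv} this hinges on $N(\zeta)=p_2^2-p_2q_2+q_2^2\bmod 3$. If $p_2q_2\equiv 0,1\imod 3$ then $3\nmid N(\zeta)$, so $\zeta$ is already a numerator and the reflection is $T_{p_2+q_2\xi,1}$. If $p_2q_2\equiv 2\imod 3$ then $3\mid N(\zeta)$, and I would divide out the ramified prime $\lambda=1-\xi$ (using $\bar\lambda=-\xi^2\lambda$) to obtain $\zeta/\bar\zeta=-\,z'/\overline{z'}$ for the numerator $z'=(2q_2-p_2)/3+\bigl((q_2-2p_2)/3\bigr)\xi$, which forces $\vep=-1$ and the reflection $T_{z',-1}$. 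In the first subcase $\vep z-\bar z=q_2(1+2\xi)$ and in the second $\vep z-\bar z=-(z'+\overline{z'})=-q_2$, so in both the condition $q_1\mid(\vep z-\bar z)$ collapses to $q_1\mid q_2$; if $q_1\nmid q_2$ no reflection survives.

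Finally I would assemble the pieces. Every reflection in $O(2)$ is an involution, and a reflection of a given direction has an essentially unique primitive-numerator description, since $\G(R)=(z)$ determines $z$ up to units; consequently each case yields at most one coincidence reflection $T$. Combined with $SOC(x+\G)=\set{\mathbbm 1}$, this shows $OC(x+\G)$ equals $\set{\mathbbm 1}$ or $\set{\mathbbm 1,T}$, which is closed under composition and hence a group with at most two elements, exactly as the four cases assert. The points that require care are the tracking of units when dividing by $\lambda$ in the case $p_2q_2\equiv 2\imod 3$, and the uniqueness of the numerator that underlies the ``at most one reflection'' step.
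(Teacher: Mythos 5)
Your proof is correct and takes essentially the approach the paper intends: the paper defers details to~\cite{G13}, but the tools it sets up for exactly this purpose---Lemma~\ref{ez-barz} to turn membership in $SOC(x+\G)$ and $OC(x+\G)$ into integrality conditions whose irrational coefficients must vanish, and Lemma~\ref{mndiv} plus the ramified prime $1-\xi$ to identify admissible numerators---are precisely what you use. Your key computations (forcing $u=v=0$ so that $SOC(x+\G)=\set{\mathbbm{1}}$, the forced directions $\vep z=\bar z$, $\vep z=\xi^2\bar z$, $\vep z/\bar z=\zeta/\bar\zeta$ with $\zeta=p_2+q_2\xi$, the unit bookkeeping giving $\zeta/\bar\zeta=-z'/\overline{z'}$ with $\vep=-1$ when $p_2q_2\equiv 2\imod 3$, and the collapse of the residual condition to $q_1\mid q_2$) all check out.
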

		
		Thus, it now remains to compute for $OC(x+\G)$ whenever both components of~$x$ are rational, i.e, whenever $x\in\Q(\xi)$.  Given $x\in\Q(\xi)$, we write $x=p/q$ where 
		$p,q\in\Z[\xi]$, and $p,q$ are relatively prime (in $\Z[\xi]$).  It turns out that $SOC(x+\G)$ ultimately depends on the value of the denominator $q$ of $x$.

		\begin{lemma}\label{qdiv}
			Let $x=p/q$ where $p,q\in\G=\Z[\xi]$ with $\gcd(p,q)=1$. Then $R_{z,\vep}\in SOC(x+\G)$ if and only if $q\mid(\vep z-\bar{z})$.
		\end{lemma}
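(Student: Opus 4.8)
The plan is to reduce the statement to Lemma~\ref{ez-barz}(i) and then to a single divisibility relation in $\Z[\xi]$ that can be simplified using the fact that $\Z[\xi]$ is a principal ideal domain.

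First, I would invoke Lemma~\ref{ez-barz}(i), which states that $R_{z,\vep}\in SOC(x+\G)$ if and only if $(\vep z-\bar{z})x\in\G$. Since $z\in\Z[\xi]$ and $\vep$ is a unit in $\Z[\xi]$, the element $w:=\vep z-\bar{z}$ lies in $\Z[\xi]$. Substituting $x=p/q$ turns the membership condition $(\vep z-\bar{z})x\in\Z[\xi]$ into $wp/q\in\Z[\xi]$, which is precisely the divisibility statement $q\mid wp$ in $\Z[\xi]$. So the whole lemma amounts to showing that $q\mid wp \iff q\mid w$.

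For this last equivalence I would use the hypothesis $\gcd(p,q)=1$ together with the fact that $\Z[\xi]$ is a Euclidean, hence principal ideal, domain. Coprimality yields $u,v\in\Z[\xi]$ with $up+vq=1$. If $q\mid wp$, then writing $w=w(up+vq)=u(wp)+v q w$ shows $q\mid w$, since $q$ divides $wp$ by assumption and divides $vqw$ trivially; the converse direction $q\mid w\Rightarrow q\mid wp$ is immediate. Hence $q\mid wp$ if and only if $q\mid w=\vep z-\bar{z}$, which is exactly the claimed characterization.

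I do not expect a genuine obstacle here: the lemma is essentially Lemma~\ref{ez-barz}(i) combined with Euclid's lemma for coprime elements. The only point requiring care is the cancellation of the factor $p$, which is precisely where both the hypothesis $\gcd(p,q)=1$ and the PID structure of $\Z[\xi]$ are used, so I would be careful to note that $x$ is assumed to be written in lowest terms, i.e. that $p$ and $q$ are genuinely coprime in $\Z[\xi]$. Everything else is routine substitution.
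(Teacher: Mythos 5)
Your proof is correct and is essentially the argument the paper intends: the paper defers details to~\cite{G13}, but Lemma~\ref{qdiv} is positioned precisely as the specialization of Lemma~\ref{ez-barz}(i) to $x=p/q$, and your reduction of $(\vep z-\bar{z})p/q\in\G$ to $q\mid(\vep z-\bar{z})$ by cancelling the coprime factor $p$ (Bézout in the PID $\Z[\xi]$) is exactly that route. No gaps.
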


		The succeeding theorem gives a further sufficient condition for $OC(x+\G)$ to form a group.
		
		\begin{thm}\label{split}
			Let $x=p/q$ where $p,q\in\G=\Z[\xi]$, and $p,q$ are relatively prime. If none of the prime factors $\pi$ of $N(q)$ satisfies $\pi\equiv 1\imod{3}$, then $OC(x+\G)$ is a group.
		\end{thm}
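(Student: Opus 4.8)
By part~(ii) of the theorem preceding Corollary~\ref{varepsilon}, it suffices to prove that for any two coincidence reflections $T_1,T_2\in OC(x+\G)$ the rotation $T_2T_1$ lies in $SOC(x+\G)$; if $OC(x+\G)$ contains no reflection then it already equals the group $SOC(x+\G)$ and there is nothing to check. So I would fix two such reflections and analyze their composition through explicit divisibility conditions.

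First I would record two criteria free of the numerator normalization. Writing a rotation as multiplication by a reduced fraction $c=A/B$, $A,B\in\Z[\xi]$ with $\gcd(A,B)=1$ and $\abs{c}=1$, one has $\G+R\G=\tfrac1B\Z[\xi]$, so the defining condition $Rx-x\in\G+R\G$ becomes (using $x=p/q$, $\gcd(p,q)=1$) the single divisibility $q\mid(A-B)$; this is Lemma~\ref{qdiv}. A reflection $T(w)=c\bar w$ with $c=A/B$ reduced belongs to $OC(x+\G)$, by the analogous computation (mirroring Lemma~\ref{ez-barz}(ii), with $\bar x$ in place of $x$), precisely when $N(q)\mid(A\bar p\,q-Bp\,\bar q)$.

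The hypothesis enters as follows. Since no prime factor of $N(q)$ is $\equiv1\imod3$, every prime factor of $q$ in $\Z[\xi]$ is either inert (a rational prime $\equiv2\imod3$) or the ramified prime $1-\xi$; in either case the conjugate of the prime is an associate of it, whence $\bar q=\eta q$ for some unit $\eta$ and $N(q)=\eta q^2$. The reflection criterion then reduces to $q\mid(A\bar p-\eta Bp)$. Applying this to $T_1$ (with $c_1=A_1/B_1$) and $T_2$ (with $c_2=A_2/B_2$), and writing the rotation $T_2T_1$ as multiplication by $c_3=c_2\bar c_1=(A_2\bar A_1)/(B_2\bar B_1)$, the goal $T_2T_1\in SOC(x+\G)$ becomes, through the rotation criterion, the divisibility $q\mid(A_2\bar A_1-B_2\bar B_1)$. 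To establish it I would conjugate the condition for $T_1$ (legitimate because $(q)=(\bar q)$, so congruences mod $q$ are preserved under conjugation) and multiply it by the condition for $T_2$; using $\eta\bar\eta=1$ and $p\bar p=N(p)$ this collapses to $N(p)\,(A_2\bar A_1-B_2\bar B_1)\equiv0\imod q$. Since $\gcd(p,q)=1$ and $(q)=(\bar q)$ give $\gcd(N(p),q)=1$ in $\Z[\xi]$, the factor $N(p)$ cancels, yielding exactly $q\mid(A_2\bar A_1-B_2\bar B_1)$.

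The step I expect to be the crux, and where the hypothesis is used a second time, is passing from the fraction $c_3=(A_2\bar A_1)/(B_2\bar B_1)$ to its reduced form $A_3/B_3$ before invoking the rotation criterion: I must check that clearing $d=\gcd(A_2\bar A_1,B_2\bar B_1)$ cannot absorb a factor of $q$. This holds because $A_i,B_i$ are units times products of primes $\equiv1\imod3$ (they arise from numerators of coincidence rotations, built in~\eqref{numerator} from split primes), so $d$ is supported on split primes, while $q$ is supported on inert and ramified primes; hence $\gcd(d,q)=1$, and $q\mid(A_2\bar A_1-B_2\bar B_1)$ is equivalent to $q\mid(A_3-B_3)$, completing the argument.
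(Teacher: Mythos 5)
Your proof is correct, and it is built from exactly the toolkit this paper assembles for the purpose: the paper itself states Theorem~\ref{split} without proof (deferring details to \cite{G13} and to the shifted square-lattice arguments of \cite{LZ10,LZxx}), and your argument combines the reflection-composition criterion from part~(ii) of the theorem preceding Corollary~\ref{varepsilon} with Lemma~\ref{ez-barz}/Lemma~\ref{qdiv}, which is precisely the intended route. In particular, you correctly identify and handle both places where the hypothesis is needed: it makes $\bar q$ an associate of $q$ (so the two reflection conditions can be conjugated and multiplied as congruences to yield the rotation condition up to the factor $N(p)$, which is coprime to $q$), and it guarantees that any cancellation in the fraction $c_2\bar c_1$ involves only split primes, so reducing the fraction cannot absorb a factor of $q$ before Lemma~\ref{qdiv} is applied.
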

		
		We end this section by giving an example.  Recall that it suffices to compute $OC(x+\G)$ for values of $x$ in some fundamental domain $D$ of the symmetry group of $\G$.  A possible
		fundamental domain is $D=\set{a+b\xi \mid a,b\in\R,0\leq 4b\leq 2a\leq b+1}$ shown in Fig.~\ref{funddom}. 
		
		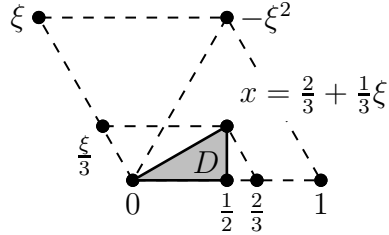
\begin{figure}[!ht]
			\psset{xunit=2.5cm,yunit=2.5cm}
			\centering
			\begin{pspicture*}(-0.7,-0.5)(1.5,1)
				\psdots[dotstyle=o,dotsize=5pt,fillcolor=black](0,0)	
				\psdots[dotstyle=o,dotsize=5pt,fillcolor=black](1,0)
				\psdots[dotstyle=o,dotsize=5pt,fillcolor=black](0.5,0.866)
				\psdots[dotstyle=o,dotsize=5pt,fillcolor=black](-0.5,0.866)
				\pspolygon[linewidth=1pt,fillstyle=solid,fillcolor=lightgray](0,0)(0.5,0)(0.5,0.288)
				\psdots[dotstyle=o,dotsize=5pt,fillcolor=black](0.5,0)
				\psdots[dotstyle=o,dotsize=5pt,fillcolor=black](0.5,0.288)
				\psdots[dotstyle=o,dotsize=5pt,fillcolor=black](0.66,0)
				\psdots[dotstyle=o,dotsize=5pt,fillcolor=black](-0.16,0.288)
				\psline[linewidth=0.75pt,linestyle=dashed,dash=4pt 4pt](0,0)(-0.5,0.866)(0.5,0.866)(1,0)(0,0)(0.5,0.866)
				\psline[linewidth=0.75pt,linestyle=dashed,dash=4pt 4pt](0.66,0)(0.5,0.288)(-0.16,0.288)
				\uput[d](0,0){\large$0$}
				\uput[d](0.5,0){\large$\frac{1}{2}$}
				\uput[d](0.66,0){\large$\frac{2}{3}$}
				\uput[d](1,0){\large$1$}
				\uput[dl](-0.16,0.288){\large$\frac{\xi}{3}$}
				\uput*{7pt}[ur](0.5,0.288){\large$x=\frac{2}{3}+\frac{1}{3}\xi$}
				\uput[l](-0.5,0.866){\large$\xi$}
				\uput[r](0.5,0.866){\large$-\xi^2$}
				\uput[r](0.25,0.1){\large{\bf$D$}}
			\end{pspicture*}
			\caption{\label{funddom} A fundamental domain $D$ for the symmetry group of $\G=\Z[\xi]$}
		\end{figure}
		
		\begin{ex}
			Let $x=(2+\xi)/3=1/(1-\xi)$ (see Fig.~\ref{funddom}).  The denominator of $x$ is $q=1-\xi$.  Let $z=m+n\xi$ be a numerator of some $R\in SOC(\G)$.  It follows from 
			Lemma~\ref{mndiv} and 
			\[\vep z-\bar{z}=\begin{cases}
				n\xi(1-\xi), &\text{if }\vep=1\\
				-m(1-\xi), &\text{if }\vep=\xi\\
				(m-n)\xi^2(1-\xi), &\text{if }\vep=\xi^2\\
				-2m+n, &\text{if }\vep=-1\\
				(m-2n)\xi^2, &\text{if }\vep=-\xi \\				
				(m+n)\xi, & \text{if }\vep=-\xi^2
			\end{cases}\] 
			that $q\mid(\vep z-\bar{z})$ if and only if $\vep\in\set{1,\xi,\xi^2}$.  Lemma~\ref{qdiv} now implies that 
			\[SOC(x+\G)=\set{R_{z,\vep}\in SOC(\G)\mid \vep\in \set{1,\xi,\xi^2}}\cong C_3\times\Z^{(\aleph_0)}.\]  
			Moreover, $OC(x+\G)=SOC(x+\G)\rtimes\gen{T_{1, -\xi^2}}$ and is a subgroup of $OC(\G)$ of index 2, by Corollary~\ref{varepsilon}. 
			If $f_{x+\G}(m)$ and $\hat{f}_{x+\G}(m)$ denote the number of CSLs and coincidence rotations of $x+\G$ of index $m$, respectively, then 
			$f_{x+\G}=f_{\Z[\xi]}$ while $\hat{f}_{x+\G}=3f_{x+\G}=(1/2)\hat{f}_{\Z[\xi]}$.
			Note that this result agrees with the discussion in the appendix of~\cite{PBR96} where the same conclusion was reached by applying a certain similarity transformation on the 
			hexagonal lattice.
		\end{ex}
			
	\section{Coincidences of the Hexagonal Packing}
		We now fix $x=1/(1-\xi)$.  Fig.~\ref{hexpack} illustrates that the hexagonal packing or honeycomb lattice can be viewed as the multilattice $L=\G\cup(x+\G)$.
		\begin{figure}[!ht]
			\psset{xunit=1.75cm,yunit=1.75cm}
			\centering\begin{pspicture*}(-0.3,-0.3)(4.3,3)
				\multido{\n=0+1.75}{4}{
					\put(\n,0){\pspolygon[linewidth=1.25pt,linestyle=dashed,dash=4pt 4pt,linecolor=gray](0,0)(0.5,0.866)(1,0)}
					\put(\n,0){\pspolygon[linewidth=1.25pt,linestyle=dashed,dash=4pt 4pt,linecolor=gray](0.5,0.866)(0,1.728)(1,1.728)}
					\put(\n,0){\pspolygon[linewidth=1.25pt,linestyle=dashed,dash=4pt 4pt,linecolor=gray](0,1.728)(0.5,2.594)(1,1.728)}
				}
				\psline[linewidth=1.25pt,linestyle=dashed,dash=4pt 4pt,linecolor=gray](0.5,0.866)(3.5,0.866)
				\psline[linewidth=1.25pt,linestyle=dashed,dash=4pt 4pt,linecolor=gray](0.5,2.594)(3.5,2.594)
				\multido{\n=0+1.75}{4}{
					\put(\n,0){\psline[linewidth=1.25pt,linecolor=black](0,0)(0.5,0.288)(0.5,0.866)(0,1.15)}
					\put(\n,0){\psline[linewidth=1.25pt,linecolor=black](1,1.15)(0.5,0.866)(0.5,0.288)(1,0)}
					\put(\n,0){\psline[linewidth=1.25pt,linecolor=black](0,1.15)(0,1.728)(0.5,2.016)(1,1.728)(1,1.15)}
				}
				\multido{\n=0+1.75}{3}{
					\put(\n,0){\psline[linewidth=1.25pt,linecolor=black](0.5,2.016)(0.5,2.594)(1,2.878)(1.5,2.594)(1.5,2.016)}
				}
				\multido{\n=0+1}{5}{
					\psdots[dotstyle=o,dotsize=5pt,fillcolor=gray](\n,0)
					\psdots[dotstyle=o,dotsize=5pt,fillcolor=black](\n,1.15)
					\psdots[dotstyle=o,dotsize=5pt,fillcolor=gray](\n,1.728)
				}
				\multido{\n=0.5+1.0}{4}{
					\psdots[dotstyle=o,dotsize=5pt,fillcolor=black](\n,0.288)	
					\psdots[dotstyle=o,dotsize=5pt,fillcolor=gray](\n,0.866)
					\psdots[dotstyle=o,dotsize=5pt,fillcolor=black](\n,2.016)
					\psdots[dotstyle=o,dotsize=5pt,fillcolor=gray](\n,2.594)
				}
				\multido{\n=1+1}{3}{
					\psdots[dotstyle=o,dotsize=5pt,fillcolor=black](\n,2.878)
				}
				\uput[d](0,0){\large$0$}
				\uput[d](1,0){\large$1$}
				\uput*{6pt}[37](0.5,0.288){$\frac{1}{1-\xi}$}
			\end{pspicture*}
			\caption{\label{hexpack} Hexagonal packing as the multilattice $L=\G\cup(x+\G)$, where $\G=\Z[i]$ and $x=1/(1-\xi)$}
		\end{figure}
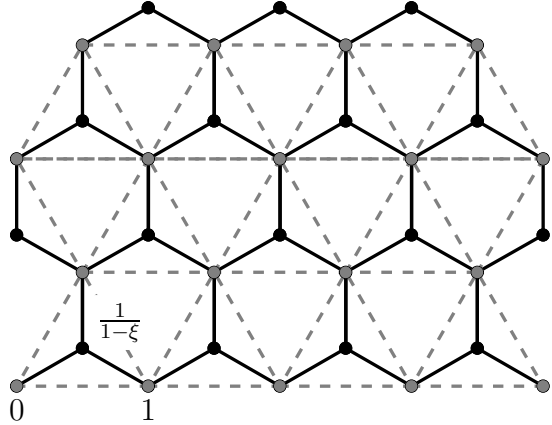
		
		Since $OC(L)=OC(\G)$, we only need to find $\S_L(R)$ and the CSML of~$L$ for an $R\in OC(\G)$.
		
		\begin{prop}\label{SigmaL}
			Let $\G=\Z[\xi]$, $L=\G\cup (x+\G)$ with $x=1/(1-\xi)$, and $R \in OC(\G)$.  
			\begin{enumerate}[i.]
				\item If $R\in OC(x+\G)$ then $\S_L(R)=\S_{\G}(R)$.
				
				\item If $R\notin OC(x+\G)$ then $\S_L(R)=2\cdot \S_{\G}(R)$.
			\end{enumerate}
		\end{prop}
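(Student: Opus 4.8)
The plan is to apply Theorem~\ref{CSMLofL} directly and reduce everything to a count. Since $L=\G\cup(x+\G)$ is generated by $m=2$ shifted copies, namely $x_0=0$ and $x_1=x$, part~(ii) of that theorem gives $\S_L(R)=(2/|\sigma_L(R)|)\,\S_\G(R)$, so the whole proposition comes down to determining $|\sigma_L(R)|$ for each $R\in OC(\G)=OC(L)$. There are exactly four candidate pairs. The pair $(0,0)$ always lies in $\sigma_L(R)$ because $0\in\G+R\G$; the pair $(x,x)$ lies in $\sigma_L(R)$ precisely when $Rx-x\in\G+R\G$, which by Theorem~\ref{oc}(i) is exactly the condition $R\in OC(x+\G)$. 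Thus everything hinges on showing that the two ``cross'' pairs $(0,x)$ and $(x,0)$ never belong to $\sigma_L(R)$. Granting this, $|\sigma_L(R)|=2$ when $R\in OC(x+\G)$ and $|\sigma_L(R)|=1$ otherwise, and substituting into the formula yields $\S_L(R)=\S_\G(R)$ and $\S_L(R)=2\,\S_\G(R)$ respectively, which are exactly statements (i) and (ii).

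To dispose of the cross terms I would first obtain a clean description of $\G+R\G$. Writing $R=R_{z,\vep}$ in the rotation case and $R=T_{z,\vep}=R_{z,\vep}T_r$ in the reflection case, the isometry acts on $\C$ as multiplication by $\vep z/\bar{z}$ (after conjugating in the reflection case), and since $\Z[\xi]$ is stable under complex conjugation we get $R\G=(\vep z/\bar{z})\Z[\xi]=(z/\bar{z})\Z[\xi]$ in both cases. Hence $\bar{z}(\G+R\G)=\bar{z}\,\Z[\xi]+z\,\Z[\xi]=(z,\bar{z})$. The factorization~\eqref{numerator} shows that $z$ and $\bar{z}$ have no common Eisenstein prime factor, so they are coprime and this ideal is all of $\Z[\xi]$; therefore
\[\G+R\G=\tfrac{1}{\bar{z}}\,\Z[\xi],\]
and membership $w\in\G+R\G$ is equivalent to the condition $\bar{z}\,w\in\Z[\xi]$.

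Now I would test the two cross conditions against this description, using $x=1/(1-\xi)$. For $(0,x)$ the requirement $-x\in\G+R\G$ becomes $\bar{z}/(1-\xi)\in\Z[\xi]$, i.e.\ $(1-\xi)\mid\bar{z}$; for $(x,0)$ the requirement $Rx\in\G+R\G$ becomes, after multiplying by $\bar{z}$, the condition $zx\in\Z[\xi]$ in the rotation case and $z\bar{x}\in\Z[\xi]$ in the reflection case, both of which reduce to $(1-\xi)\mid z$ once one notes that $\bar{x}=-\xi/(1-\xi)$ and that $\xi$ is a unit. But $1-\xi$ is the ramified prime above $3$, with $N(1-\xi)=3$, whereas Lemma~\ref{mndiv} guarantees $3\nmid N(z)$ for any numerator $z$; since $N(\bar{z})=N(z)$, neither $z$ nor $\bar{z}$ is divisible by $1-\xi$. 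Hence both cross conditions fail for every $R\in OC(\G)$, so $|\sigma_L(R)|\in\set{1,2}$ as claimed above.

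The step I expect to be the crux is precisely the identification $\G+R\G=\tfrac{1}{\bar{z}}\Z[\xi]$ combined with the observation that the denominator $q=1-\xi$ of $x$ is exactly the prime over $3$, which is coprime to every admissible numerator. This arithmetic coincidence is what forces the cross pairs out of $\sigma_L(R)$ and thereby pins $|\sigma_L(R)|$ down to $2$ or $1$; once that is in hand, the remainder is a routine substitution into Theorem~\ref{CSMLofL}(ii).
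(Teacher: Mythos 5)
Your proposal is correct and takes essentially the same route as the paper: the paper's proof likewise reduces the claim to deciding which of the four pairs $(0,0),(0,x),(x,0),(x,x)$ belong to $\sigma_L(R)$, and rests on the very same arithmetic observation that the denominator $1-\xi$ of $x$ divides the ramified prime $3$, which never divides an admissible numerator $z$ (nor $\bar{z}$, since $3\nmid N(z)$). The only difference is that the paper defers the verification that $\sigma_L(R)=\set{(0,0),(x,x)}$ or $\set{(0,0)}$ to an external reference, whereas you carry it out explicitly via the identification $\G+R\G=\frac{1}{\bar{z}}\,\Z[\xi]$, which makes your write-up self-contained.
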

		\begin{proof}
			To find $\S_L(R)$ for a given $R\in OC(\G)$, we examine which of the four possible pairs, $(0,0),(0,x),(x,0),(x,x)$, belong to $\sigma_L(R)$ from ~\eqref{sigma}.  Observe that
			the denominator $1-\xi$ of $x$ is a factor of the ramified prime $3$ in $\Z[\xi]$.
			
			If $R\in OC(x+\G)$, then one obtains (see~\cite{AGLxx} for details) that $\sigma_L(R)=\set{(0,0),(x,x)}$ and so $\S_L(R)=(2/2)\S_{\G}(R)=\S_{\G}(R)$.  On the other hand, if 
			$R\notin OC(x+\G)$ then $\sigma_L(R)=\set{(0,0)}$.  Hence, $\S_L(R)=(2/1)\S_{\G}(R)=2\cdot\S_{\G}(R)$.
		\end{proof}

		It follows from Theorem~\ref{CSMLofL} and the proof of Proposition~\ref{SigmaL} that the CSML of $L$ obtained from $R\in OC(\G)$ is $L(R)=\G(R)\cup [(x+\G)\cap R(x+\G)]$ if 
		$R\in OC(x+\G)$, and $L(R)=\G(R)$ otherwise.
		
	\section{Coincidences of a Shifted Hexagonal Packing}
		In the previous section, we established that the origin has only $D_3$ site symmetry with respect to the hexagonal packing $L=\G\cup (x+\G)$.  To obtain a maximal site symmetry of $
		D_6$, it is necessary to rotate $L$ about a different center.  Taking this center as $-x$, observe that rotating $L$ about $-x$ is equivalent to rotating $x+L$ about the origin. 
		This motivates us to investigate further the linear coincidences of a \emph{shifted multilattice $x+L$}.  The following theorem~\cite{AGLxx} tells us how to determine $OC(x+L)$,
		and gives formulas for $\S_{x+L}(R)$ and the CSML of $x+L$ for an $R\in OC(x+L)$.

		\begin{thm}\label{CSMLx+L}
			Let $L=\bigcup_{k=0}^{m-1}(x_k+\G)$ be a multilattice generated by the lattice $\G$ and
			$\sigma_{x+L}(R)\vcentcolon=\set{(x+x_j,x+x_k)\mid R(x+x_j)-(x+x_k)\in\G+R\G,0\leq j,k\leq m-1}$.
			\begin{enumerate}[i.]
				\item Then $OC(x+L)=\set{R\in OC(\G)\mid \sigma_{x+L}(R)\neq\varnothing}$.
				
				\item The coincidence index of $R\in OC(x+L)$ with respect to $x+L$ is \[\S_{x+L}(R)=(m/|\sigma_{x+L}(R)|)\S_{\G}(R).\]
		
				\item For every $(x+x_j,x+x_k)\in\sigma_{x+L}(R)$, there exists an $\ell_{j,k}\in\G$ such that $R(x+x_j)-(x+x_k)\in\ell_{j,k}+R\G$.  Thus, the CSML of $x+L$ is given by 
				\[(x+L)\cap R(x+L)=\ts\bigcup_{(x_j,x_k)\in\sigma_{x+L}(R)}[(x+x_k+\ell_{j,k})+\G(R)].\]
			\end{enumerate}
		\end{thm}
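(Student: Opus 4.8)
The plan is to treat $x+L=\bigcup_{k=0}^{m-1}(x+x_k+\G)$ as a union of $m$ shifted copies of $\G$ and to recover the three assertions by mimicking the proof of Theorem~\ref{CSMLofL}, with each base point $x_k$ replaced by $x+x_k$. The conceptual point is that $x+L$ has exactly the form of a multilattice, the only difference being that its distinguished base point need no longer be the origin (since $x+x_0=x$). Consequently the tautological pair $(0,0)$, which always sits in $\sigma_L(R)$ and forces $OC(L)=OC(\G)$, is no longer available; this is precisely why $\sigma_{x+L}(R)$ can be empty and $OC(x+L)$ can be a proper subset of $OC(\G)$.

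First I would isolate the elementary coset-intersection fact that drives everything. For shift vectors $y,y'\in\R^d$ and $R\in OC(\G)$, a point $p$ lies in $(y+\G)\cap R(y'+\G)$ if and only if $p\in(y+\G)\cap(Ry'+R\G)$, the intersection of a coset of $\G$ with a coset of $R\G$. This is empty unless $Ry'-y\in\G+R\G$, in which case it is a single coset of $\G\cap R\G=\G(R)$. Setting $y=x+x_k$ and $y'=x+x_j$, the nonemptiness condition becomes exactly $(x+x_j,x+x_k)\in\sigma_{x+L}(R)$; choosing $\ell_{j,k}\in\G$ with $R(x+x_j)-(x+x_k)\in\ell_{j,k}+R\G$, one verifies directly that $x+x_k+\ell_{j,k}$ satisfies both membership conditions, so that
\[(x+x_k+\G)\cap R(x+x_j+\G)=(x+x_k+\ell_{j,k})+\G(R).\]
Taking the union over all pairs yields part~(iii), and the nonemptiness criterion gives the forward implication of part~(i).

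To finish part~(i) I would check that a coincidence isometry of $x+L$ necessarily lies in $OC(\G)$. If $(x+L)\cap R(x+L)$ contains a cosublattice $(x+x_k+\ell)+\G'$ with $\G'$ of full rank in $\G$, then, since there are only finitely many layers $R(x+x_j+\G)$, a full-rank subcollection of these points falls in a single layer; differencing such points forces a full-rank sublattice of $\G$ into $R\G$, so $\G\cap R\G$ has full rank and $R\in OC(\G)$. The converse is immediate from the previous paragraph, since any pair in $\sigma_{x+L}(R)$ produces an honest cosublattice in the intersection.

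For part~(ii) I would argue at the level of densities, using that translation preserves density, so $x+L$ and $L$ both have density $m/V$ with $V$ the covolume of $\G$. By part~(iii) the CSML is a union of $|\sigma_{x+L}(R)|$ cosets of $\G(R)$, each of density $1/(V\,\S_{\G}(R))$, and the one substantive step is to see that these cosets are pairwise distinct. Cosets with distinct target indices $k\neq k'$ sit in the disjoint layers $x+x_k+\G$ and $x+x_{k'}+\G$ --- disjoint because $x_k-x_{k'}\notin\G$ --- and so cannot meet; for two pairs $(x+x_j,x+x_k)$ and $(x+x_{j'},x+x_k)$ sharing $k$, equality of cosets would give $\ell_{j,k}-\ell_{j',k}\in\G(R)\subseteq R\G$, and subtracting the two defining relations would yield $R(x_j-x_{j'})\in R\G$, i.e.\ $x_j-x_{j'}\in\G$, contradicting $j\neq j'$. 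Hence the CSML has density $|\sigma_{x+L}(R)|/(V\,\S_{\G}(R))$, and comparing densities gives
\[\S_{x+L}(R)=\frac{m/V}{|\sigma_{x+L}(R)|/(V\,\S_{\G}(R))}=\frac{m}{|\sigma_{x+L}(R)|}\,\S_{\G}(R).\]
I expect the distinctness bookkeeping to be the only delicate point, and it reduces cleanly to the defining multilattice property $x_j-x_{j'}\notin\G$ for $j\neq j'$.
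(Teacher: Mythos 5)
The paper contains no proof of Theorem~\ref{CSMLx+L} for you to be compared against: like Theorem~\ref{CSMLofL}, it is imported from the companion paper~\cite{AGLxx}, and only its consequences (Propositions~\ref{SigmaL} and~\ref{Sigmax+Lhexagonal}) are argued here. Judged on its own terms, your proposal is correct and is surely the intended route: view $x+L$ as a multilattice whose base points $x+x_k$ need not include the origin, and redo the layer-by-layer coset analysis behind Theorem~\ref{CSMLofL}. Your central coset fact --- $(x+x_k+\G)\cap R(x+x_j+\G)$ is empty unless $R(x+x_j)-(x+x_k)\in\G+R\G$, in which case it is a single coset of $\G(R)$ --- immediately gives part (iii) and the easy inclusion of part (i), and your disjointness bookkeeping for part (ii) (distinct target indices $k\neq k'$ give disjoint layers; a shared $k$ with $j\neq j'$ would force $x_j-x_{j'}\in\G$) is exactly the right use of the multilattice condition. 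You also correctly isolate the one conceptual difference from the unshifted case: the tautological pair $(0,0)$ is gone, so $\sigma_{x+L}(R)$ can be empty and $OC(x+L)$ can be a proper subset of $OC(\G)$.

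The only step that needs more than you give it is the hard inclusion of part (i): you assert that, because there are finitely many layers $R(x+x_j+\G)$, ``a full-rank subcollection'' of the cosublattice $(x+x_k+\ell)+\G'$ lands in a single layer. Pigeonhole alone yields only infinitely many points in one layer, and infinitely many lattice points can sit on a line, so this does not yet give full-rank differences. The claim is true, but it needs an extra ingredient: write $C_j\subseteq\G'$ for the set of $\gamma'$ with $(x+x_k+\ell)+\gamma'\in R(x+x_j+\G)$, and $H_j$ for the subgroup generated by $C_j-C_j$; then $C_j-C_j\subseteq\G'\cap R\G$, so each $H_j$ is a subgroup of $\G\cap R\G$, and $\G'$ is covered by the finitely many cosets $c_j+H_j$ with $c_j\in C_j$. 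Now invoke either B.H.~Neumann's lemma (a group is never a finite union of cosets of infinite-index subgroups) or a density argument (a coset of a rank-deficient subgroup has density zero, and finitely many such cosets cannot cover the full-rank lattice $\G'$) to conclude that some $H_j$ has finite index in $\G'$, hence full rank, whence $\G\cap R\G$ has full rank and $R\in OC(\G)$. With that patch your proof is complete.
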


		Since $x\in -2x+\G$, the hexagonal packing may be written as $L=\G\cup(x+\G)=\G\cup(-2x+\G)$.  Observe that the centers of the hexagons in $L$ are points with maximal site symmetry
		$D_6$.  One of these centers is $-x$, so we look at coincidences of the shifted multilattice $x+L=(x+\G)\cup(-x+\G)$.

		\begin{prop}\label{Sigmax+Lhexagonal}
			Let $L=\G\cup(-2x+\G)$ where $\G=\Z[\xi]$ and $x=1/(1-\xi)$.  Then $OC(x+L)=OC(\G)$ and for all $R\in OC(x+L)$, $\S_{x+L}(R)=\S_{\G}(R)$.
		\end{prop}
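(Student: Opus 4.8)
The plan is to collapse both claims into a single cardinality statement. Since $x\in-2x+\G$ gives $x+L=(x+\G)\cup(-x+\G)$, the shifted multilattice $x+L$ has $m=2$ with centres $y_0=x$ and $y_1=-x$. By parts (i) and (ii) of Theorem~\ref{CSMLx+L}, it suffices to prove that $\abs{\sigma_{x+L}(R)}=2$ for every $R\in OC(\G)$: this makes $\sigma_{x+L}(R)\neq\varnothing$, hence $OC(x+L)=OC(\G)$, and it yields $\S_{x+L}(R)=(2/2)\,\S_{\G}(R)=\S_{\G}(R)$.

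First I would sort the four candidate pairs. The membership test for $(y_j,y_k)$ is $Ry_j-y_k\in\G+R\G$, and because $\G+R\G$ is symmetric, the pairs $(x,x)$ and $(-x,-x)$ share one condition while $(x,-x)$, $(-x,x)$ share another:
\[(\mathrm{A})\ \ Rx-x\in\G+R\G,\qquad (\mathrm{B})\ \ Rx+x\in\G+R\G.\]
Hence $\abs{\sigma_{x+L}(R)}\in\set{0,2,4}$, and the whole proposition reduces to showing that \emph{exactly one} of (A), (B) holds for each $R$.

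To make (A), (B) explicit, write $R=R_{z,\vep}$, realised by multiplication by $\vep z/\bar z$ as in~\eqref{eiphi} and~\eqref{numerator}, so that $R\G=(z/\bar z)\G$. Multiplying by $\bar z$ and using that $z,\bar z$ are coprime in $\Z[\xi]$ — which follows from~\eqref{numerator}, since $z$ is assembled from split primes $\omega_p$ and $\bar z$ from their distinct conjugates $\overline{\omega_p}$ — I get $\bar z\,(\G+R\G)=(z,\bar z)=\Z[\xi]$, i.e.\ $\G+R\G=\bar z^{-1}\Z[\xi]$. Substituting $x=1/(1-\xi)$ turns (A) and (B) into the divisibilities $(1-\xi)\mid(\vep z-\bar z)$ and $(1-\xi)\mid(\vep z+\bar z)$, the first being precisely Lemma~\ref{qdiv}. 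Now reduce modulo the ramified prime $1-\xi$: in the residue field $\Z[\xi]/(1-\xi)\cong\Z/3\Z$, Lemma~\ref{mndiv} gives $3\nmid N(z)$, so $1-\xi\nmid z$, and since $1-\xi$ is fixed up to a unit by conjugation, also $1-\xi\nmid\bar z$. Thus $\vep z$ and $\bar z$ are invertible, $\vep z\,\bar z^{-1}\equiv\pm1$, and exactly one of $\vep z\equiv\bar z$ (giving (A)) or $\vep z\equiv-\bar z$ (giving (B)) holds; this is $\abs{\sigma_{x+L}(R)}=2$ for rotations.

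Finally I would run a reflection $T=R\cdot T_r=T_{z,\vep}$ through the same machine. It acts by $w\mapsto\vep(z/\bar z)\bar w$, so again $\G+T\G=\bar z^{-1}\Z[\xi]$; with $\bar x=-\xi/(1-\xi)$ the two pair-conditions become $(1-\xi)\mid(\vep z\,\xi\pm\bar z)$, which on reducing modulo $1-\xi$ (where $\xi\equiv1$) collapse to $(1-\xi)\mid(\vep z\pm\bar z)$ — the same conditions (A), (B) attached to the opposite pairs — so the identical $\Z/3\Z$ dichotomy gives $\abs{\sigma_{x+L}(T)}=2$. The main obstacle is the bookkeeping behind $\G+R\G=\bar z^{-1}\Z[\xi]$: justifying coprimality of $z$ and $\bar z$ from~\eqref{numerator}, checking that the unit $\vep$ and complex conjugation are harmless, and confirming that ramification makes $1-\xi\nmid z$ equivalent to $1-\xi\nmid\bar z$. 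Once these are secured, the $\Z/3\Z$ argument pins $\abs{\sigma_{x+L}(R)}$ to $2$ (never $0$ or $4$), completing both assertions.
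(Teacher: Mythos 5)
Your proposal is correct and follows essentially the same route as the paper: both reduce the claim to determining which of the four pairs $(\pm x,\pm x)$ lie in $\sigma_{x+L}(R)$, observe that these collapse into the two conditions $Rx\mp x\in\G+R\G$, and conclude $\abs{\sigma_{x+L}(R)}=2$ for every $R\in OC(\G)$, whence both assertions follow from Theorem~\ref{CSMLx+L}. The only difference is one of completeness: the paper phrases the dichotomy as the case split $R\in OC(x+\G)$ versus $R\notin OC(x+\G)$ and defers the verification to~\cite{AGLxx}, whereas you prove it directly (and correctly) via $\G+R\G=\bar z^{-1}\Z[\xi]$ and reduction modulo the ramified prime $1-\xi$, thereby supplying the details the paper omits.
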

		\begin{proof}
			Analogous to the proof of Proposition~\ref{SigmaL}, we need to find which of the four pairs $(x,x)$, $(x,-x)$, $(-x,x)$, $(-x,-x)$ belong to $\sigma_{x+L}(R)$ for an 
			$R\in OC(\G)$.  The following are due largely to Theorem~\ref{CSMLofL}, and full details may be found in [12].

			If $R\in OC(x+\G)$, $\sigma_{x+L}(R)=\set{(x,x),(-x,-x)}$, and therefore $\S_{x+L}(R)=(2/2)\S_{\G}(R)=\S_{\G}(R)$. Otherwise, $\sigma_{x+L}(R)=\set{(x,-x),(-x,x)}$, which
			yields $\S_{x+L}(R)=(2/2)\S_{\G}(R)=\S_{\G}(R)$.  In both cases, the conclusion now follows.
		\end{proof}

		Proposition~\ref{Sigmax+Lhexagonal} also appears in the Appendix of~\cite{PBR96}.  Nonetheless, we gain from this different approach the following information about the points of
		intersection by applying Theorem~\ref{CSMLx+L}.  If $R\in OC(x+\G)$, then the CSML of $x+L$ generated by $R$ is the union of the CSLs of $x+\G$ and $(-x+\G)$
		obtained from $R$.  On the other hand, the CSML generated by $R\notin OC(x+\G)$ is obtained from the intersection of $R(-x+\G)$ with $x+\G$ and the intersection
		of $R(x+\G)$ with $-x+\G$.	

	\section*{Acknowledgement}
		M.J.C.~Loquias would like to thank the Office of the Chancellor of the University of the Philippines Diliman, through the Office of the Vice Chancellor for Research and Development,
		for funding support through the Ph.D.~Incentive Awards.
	
	\bibliographystyle{amsplain}
	\bibliography{bibliog}
\end{document}